\numberwithin{equation}{section}
\DeclareSymbolFont{SY}{U}{psy}{m}{n}
\DeclareMathSymbol{\emptyset}{\mathord}{SY}{'306}
\newtheorem{thm}{Theorem}{\bf}{\it}
\newtheorem{lem}[thm]{Lemma}{\bf}{\it}
\newtheorem{remark}[thm]{Remark}{\bf}{\it}
\theoremstyle{definition}
\title[Another proof of Moon's theorem on generalised tournament score sequences]{Another proof of Moon's theorem on generalised tournament score sequences}
\author[Erik ~Th\"ornblad]{Erik Th\"ornblad}
 \address{Department of Mathematics, Uppsala University, Box 480, S-75106 Uppsala, Sweden.}
 \email{erik.thornblad@math.uu.se}
 \date{\today}
\begin{document}
\begin{abstract}
Landau \cite{Landau1953} showed that a sequence $(d_i)_{i=1}^n$ of integers is the score sequence of some tournament if and only if $\sum_{i\in J}d_i \geq \binom{|J|}{2}$ for all $J\subseteq \{1,2,\dots, n\}$, with equality if $|J|=n$. Moon \cite{Moon63} extended this result to generalised tournaments. We show how Moon's result can be derived from Landau's result.
\end{abstract}
\maketitle

\section{Introduction}
A \emph{tournament} is a directed complete graph. Formally, it is a graph $G=(V(G),E(G))$ such that $(i,j)\in E(G)$ if and only if $(j,i)\notin E(G)$ for all distinct $i,j\in V(G)$, and moreover $(i,i)\notin E(G)$ for all $i\in V(G)$. Given a vertex $u\in V(G)$, its outdegree is defined as the sum $\sum_{v\in V(G)} \mathbbm{1}((u,v)\in E(G))$, i.e. the number of outgoing edges from $u$. These numbers can be computed for all vertices of $G$, resulting in the sequence $(d_i)_{i=1}^n$ of outdegrees of $G$, known as the \emph{score sequence} of $G$. A classical result is the following characterisation of permissible score sequences, originally due to Landau \cite{Landau1953}.

\begin{thm}[\cite{Landau1953}]\label{thm:landau}
A sequence $(d_i)_{i=1}^n$ of non--negative integers is the score sequence of some tournament on $n$ vertices if and only if $\sum_{i\in J} d_i\geq {|J| \choose 2}$ for all subsets $J\subseteq \{1,2,\dots , n\}$, with equality for $J=\{1,2,\dots, n\}$.
\end{thm}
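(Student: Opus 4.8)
The plan is to prove the two implications separately, with the converse (sufficiency) being the substantial part; throughout I may assume, after relabelling vertices, that $d_1 \le d_2 \le \cdots \le d_n$, since realizability is unaffected by permuting the sequence.

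For necessity, suppose $(d_i)_{i=1}^n$ is the score sequence of a tournament $G$ and fix $J \subseteq \{1,\dots,n\}$. I would split each out-degree $d_i$ with $i \in J$ according to whether the head of the corresponding edge lies inside or outside $J$. The edges with both endpoints in $J$ are precisely the $\binom{|J|}{2}$ games played among the vertices of $J$, and each such game contributes exactly $1$ to $\sum_{i\in J} d_i$, while the remaining contributions are non-negative. Hence $\sum_{i \in J} d_i \ge \binom{|J|}{2}$, and when $J=\{1,\dots,n\}$ every game is internal, so the inequality becomes the equality $\sum_{i=1}^n d_i = \binom{n}{2}$.

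For sufficiency I would argue by induction on $n$, realizing the top vertex $n$ and recursing. Vertex $n$ should have out-degree $d_n$, so it must beat $d_n$ of the other vertices and lose to the remaining $n-1-d_n$; removing it leaves a tournament on $n-1$ vertices whose score sequence is obtained from $(d_1,\dots,d_{n-1})$ by subtracting $1$ from the $n-1-d_n$ scores belonging to the vertices that beat $n$. A direct count using the full-set equality shows the resulting numbers sum to $\binom{n}{2}-d_n-(n-1-d_n)=\binom{n-1}{2}$, matching the equality constraint at level $n-1$. Note also that $d_n \le n-1$, which follows from the full-set equality together with the inequality for $J=\{1,\dots,n-1\}$, so the number $n-1-d_n$ of forced losses is non-negative. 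The crux is to choose \emph{which} scores to decrement so that Landau's inequalities persist for the reduced sequence; the natural choice is to decrement the $n-1-d_n$ largest entries among $d_1,\dots,d_{n-1}$.

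The main obstacle is exactly this verification. Since the sequence is sorted, it suffices to check $\sum_{i=1}^{k} d_i' \ge \binom{k}{2}$ for each initial segment of the re-sorted reduced sequence, and I would handle two cases according to whether the corresponding prefix sum was tight (met its bound with equality) in the original sequence; the bound $d_n \le n-1$ is what guarantees that not too many decrements are forced and that no inequality is violated. Once the reduced sequence is shown to satisfy Landau's conditions, the induction hypothesis yields a tournament on $n-1$ vertices, and re-attaching vertex $n$ with the prescribed wins and losses produces the desired tournament, closing the induction. As an alternative to this inductive route, one can derive sufficiency from a Hall- or max-flow/min-cut-type argument, assigning each of the $\binom{n}{2}$ unordered pairs to a winner subject to the out-degree demands $d_i$ and checking that the deficiency condition reduces precisely to the stated inequalities; I expect the inductive argument to be the cleaner one to write out in full.
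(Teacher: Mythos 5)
A preliminary remark on the comparison: the paper does not actually prove this theorem. Theorem \ref{thm:landau} is the \emph{assumed input} of the whole note --- the introduction sketches only the necessity direction (by exactly the edge-counting you give: the $\binom{|J|}{2}$ games internal to $J$ each contribute $1$ to $\sum_{i\in J}d_i$, outgoing edges contribute non-negatively) and explicitly delegates sufficiency to the literature (Landau, Thomassen, Griggs--Reid). So your necessity argument matches the paper's sketch and is complete, and your sufficiency attempt can only be judged on its own merits. There your overall strategy is sound and standard: induction on $n$, with the top vertex beating the $d_n$ smallest-scored vertices, i.e.\ decrementing the $n-1-d_n$ largest entries among $d_1,\dots,d_{n-1}$; your derivation of $d_n\le n-1$ and the count $\binom{n}{2}-d_n-(n-1-d_n)=\binom{n-1}{2}$ are both correct.

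However, the step you defer --- ``the main obstacle is exactly this verification'' --- is the entire content of the sufficiency proof, and the plan you sketch for it is not adequate as stated. Two concrete problems. First, after decrementing a suffix of a sorted sequence the result need not be sorted, so checking initial segments of the re-sorted sequence amounts to checking, for \emph{every} split $a+b=k$ with $0\le a\le d_n$ and $0\le b\le n-1-d_n$, the two-parameter family of inequalities $\sum_{i=1}^{a}d_i+\sum_{j=1}^{b}\bigl(d_{d_n+j}-1\bigr)\ge\binom{k}{2}$; this is not a single prefix check. Second, your dichotomy ``tight vs.\ not tight'' is too coarse: when $b\ge 2$ decrements land in one prefix, strictness buys slack $1$ where you need slack $b$. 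What actually closes the argument is quantitative: set $t_k=\sum_{i=1}^{k}d_i-\binom{k}{2}$; then $\sum_{i=1}^{k+1}d_i\ge\binom{k+1}{2}$ gives $d_{k+1}\ge k-t_k$, hence by sortedness $t_k\ge k-d_n$ for all $k\le n-1$. This settles the boundary case $a=d_n$ (where $b=k-d_n$ exactly, with equality when $k=n-1$), and the mixed cases follow by downward induction on $a$: writing $g(a,b)$ for the slack of the mixed subset above, one has $g(a,b)=g(a+1,b)-d_{a+1}+(a+b)$, so if $d_{a+1}\le a+b$ then $g(a,b)\ge g(a+1,b)\ge 0$, while if $d_{a+1}\ge a+b+1$ then every decremented entry is $\ge a+b+1$ and $\sum_{i=1}^{a}d_i\ge\binom{a}{2}$ yields directly $g(a,b)\ge\binom{a}{2}+b(a+b)-b-\binom{a+b}{2}=\binom{b+1}{2}-b\ge 0$. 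So your choice of decrement is correct and the gap is fillable along roughly the lines you gesture at, but as written the proposal omits the one step that constitutes the proof, and the tight/non-tight case split you propose would not by itself produce the needed amount of slack. (One should also note, trivially, that the decremented entries remain non-negative, so the induction hypothesis applies.)
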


It is easy to see necessity in the above theorem. Namely, given any subset of size $k$ of the vertices, look at the induced subtournament. This must have $\binom{k}{2}$ internal edges, contributing this much to the sum of the scores of the vertices in this subtournament, and possibly some edges leaving from this subtournament to the rest of the tournament. For sufficiency there are numerous proofs, see e.g. \cite{GriggsReid1999, Landau1953, Thomassen1981}.

A \emph{generalised tournament} $G=(V(G),\alpha)$ is a set $V(G)=\{1,\dots, n\}$ of vertices along with a function $\alpha:V(G)\times V(G) \to [0,1]$, such that $\alpha(i,j)+\alpha(j,i)=1$ for all $(i,j)\in V(G)\times V(G)$, $i\neq j$, and $\alpha(i,i)=0$ for all $i\in V(G)$. We will sometimes call the function $\alpha$ the edge--weight function, etc. Note that a tournament is a generalised tournament for which $\alpha$ only takes values in $\{0,1\}$. Given a vertex $u\in V$, the \emph{outdegree} of $u$ is defined as the sum $\sum_{v\in V(G)}\alpha(u,v)$, extending the definition for tournaments to generalised tournaments. Score sequences are defined analogously. A natural question is whether Theorem \ref{thm:landau} extends to generalised tournaments. This was answered in the affirmative by Moon \cite{Moon63}.

\begin{thm}[\cite{Moon63}] \label{thm:moon}
 A sequence $(d_i)_{i=1}^n$ of non--negative real numbers is the score sequence of some generalised tournament if and only if $\sum_{i\in J} d_i\geq {|J| \choose 2}$ for all subsets $J\subseteq \{1,2,\dots , n\}$, with equality for $J=\{1,2,\dots, n\}$. 
\end{thm}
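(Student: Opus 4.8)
The plan is to deduce sufficiency from Landau's theorem (Theorem \ref{thm:landau}) by a convexity argument, necessity being proved exactly as in the ordinary tournament case. For necessity, fix a generalised tournament realising $(d_i)$ and a subset $J$; separating each outgoing edge--weight into an internal and an external part gives $\sum_{i\in J} d_i = \sum_{\{i,j\}\subseteq J}\bigl(\alpha(i,j)+\alpha(j,i)\bigr) + \sum_{i\in J,\, v\notin J}\alpha(i,v) = \binom{|J|}{2} + (\text{something}\geq 0)$, where the external sum vanishes when $J=\{1,\dots,n\}$. This is the same bookkeeping as for Landau, now with real weights.

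For sufficiency, write $x(J)=\sum_{i\in J}x_i$ and let
\[
P_n = \Bigl\{ x \in \mathbb{R}^n : x(J)\geq \tbinom{|J|}{2}\ \text{for all } J\subseteq\{1,\dots,n\},\ x(\{1,\dots,n\})=\tbinom{n}{2}\Bigr\}
\]
be the polytope cut out by the stated conditions, and let $\mathcal{S}\subseteq\mathbb{R}^n$ be the set of score sequences of generalised tournaments on $n$ vertices. The first observation is that $\mathcal{S}$ is convex: if $\alpha_0,\alpha_1$ are edge--weight functions then so is $(1-t)\alpha_0+t\alpha_1$ for $t\in[0,1]$, and the score sequence is a linear function of the edge--weight function, so the scores interpolate linearly. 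The second observation is that every \emph{integer} point of $P_n$ consists of non--negative integers (the singleton constraints give $x_i\geq 0$) satisfying Landau's inequalities, hence is the score sequence of an ordinary tournament and lies in $\mathcal{S}$. Therefore $\mathrm{conv}(P_n\cap\mathbb{Z}^n)\subseteq\mathcal{S}\subseteq P_n$, and sufficiency follows once we establish that $P_n$ is integral, i.e.
\[
P_n = \mathrm{conv}\bigl(P_n\cap\mathbb{Z}^n\bigr).
\]

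Integrality is the main obstacle. Since $P_n$ is bounded (each coordinate lies in $[0,\tbinom{n}{2}]$), it equals the convex hull of its vertices, so it suffices to show each vertex is integral. The function $f(J)=\binom{|J|}{2}$ is integer--valued and supermodular, because $\binom{m}{2}$ is convex in $m$, so $f(A)+f(B)\leq f(A\cap B)+f(A\cup B)$. Combining this with the identity $x(A)+x(B)=x(A\cup B)+x(A\cap B)$ shows that the family of sets tight at a point $x\in P_n$ is closed under unions and intersections: if $x(A)=f(A)$ and $x(B)=f(B)$ then the chain of inequalities $f(A)+f(B)=x(A)+x(B)=x(A\cup B)+x(A\cap B)\geq f(A\cup B)+f(A\cap B)\geq f(A)+f(B)$ collapses to equalities. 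At a vertex $n$ linearly independent constraints are tight, and from this lattice of tight sets one extracts a maximal chain $\emptyset\subsetneq S_1\subsetneq\cdots\subsetneq S_n=\{1,\dots,n\}$; linear independence forces $|S_k|=k$, whence the coordinate at the element added at step $k$ equals $\binom{k}{2}-\binom{k-1}{2}=k-1$. Thus every vertex of $P_n$ is a permutation of $(0,1,\dots,n-1)$, in particular integral.

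Assembling the pieces gives $P_n=\mathrm{conv}(\text{vertices})\subseteq\mathrm{conv}(P_n\cap\mathbb{Z}^n)\subseteq\mathcal{S}$, while necessity gives $\mathcal{S}\subseteq P_n$; hence $\mathcal{S}=P_n$, which is Moon's theorem. I expect the delicate point to be the chain--extraction step: arguing that linear independence of the tight constraints at a vertex really forces a chain whose increments are singletons. This rests on the fact that the tight sets form a ring family (a distributive lattice) and that the rank of the associated constraints equals the length of a maximal chain in it, so a zero--dimensional face can only occur when that chain refines all the way down to singletons. Everything else is routine, and it is worth noting that the vertices turn out to be exactly the score sequences of transitive tournaments, so only this special case of Landau's theorem is ultimately invoked.
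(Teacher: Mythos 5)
Your proof is correct, but it takes a genuinely different route from the paper. The paper argues by rational approximation: it first proves the rational case by blowing each vertex up into $m$ copies (adding an auxiliary tournament score sequence $(b_\ell)_{\ell=1}^m$ so that the inflated integer array $c_{i,\ell}=md_i+b_\ell$ satisfies Landau's condition), applying Theorem \ref{thm:landau} on $mn$ vertices, and averaging edge indicators between clusters to obtain rational edge weights; it then handles real sequences via a perturbation lemma producing rational approximants that still satisfy the inequalities, together with compactness of $[0,1]^{\binom{n}{2}}$ to pass to a limit. You instead identify the feasible region $P_n$ as an integral polytope --- in fact the permutohedron on $(0,1,\dots,n-1)$ --- using supermodularity of $J\mapsto\binom{|J|}{2}$ and the tight-set analysis of vertices, and combine this with convexity of the set of generalised score sequences. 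The chain-extraction step you flag as delicate is indeed the crux, and your justification is sound: for a family of tight sets closed under union and intersection, every member $T$ is a union of the jumps $S_k\setminus S_{k-1}$ of any maximal chain (since $S_{k-1}\cup(T\cap S_k)$ belongs to the family and is sandwiched between $S_{k-1}$ and $S_k$, maximality forces each jump to lie inside or outside $T$), so the rank of the tight constraints equals the chain length, and rank $n$ at a vertex forces singleton jumps and coordinates $k-1=\binom{k}{2}-\binom{k-1}{2}$. As to what each approach buys: yours avoids approximation and compactness entirely, expresses every admissible sequence as a finite convex combination, and, as you note, invokes only the transitive case of Landau's theorem, so it is essentially self-contained modulo standard polyhedral facts (a bounded polyhedron is the convex hull of its vertices, and vertices are points with $n$ linearly independent tight constraints); the paper's argument is more elementary in that it uses no polyhedral combinatorics, it explicitly produces rational-weight tournaments for rational score sequences (useful algorithmically, as the paper's remark points out), and its approximation-plus-compactness template transfers to other ``generalised'' results such as Eplett's theorem for self-converse generalised tournaments, where the feasible set need not be such a clean polytope.
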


The proof given by Moon \cite{Moon63} uses a network flow approach by showing that the existence of a generalised tournament with the prescribed score sequence follows from the existence of a flow satisfying certain conditions. Another proof of Theorem \ref{thm:moon} is given by Bang and Sharp \cite{BangSharp1977}, in which the authors use a version of Hall's theorem along with a rational approximation and compactness argument. The same kind of argument lies at the heart of our proof as well; however, rather than using Hall's theorem, we will assume only the validity of Theorem \ref{thm:landau}. Since Theorem \ref{thm:landau} is precisely the tournament--version of Theorem \ref{thm:moon}, this is perhaps the most ``natural'' starting point. Thus, in this note, we present a new proof of Theorem \ref{thm:moon}, assuming only the validity of \ref{thm:landau}.

\subsection{Outline}
In this section we outline how Theorem \ref{thm:moon} follows from Theorem \ref{thm:landau}. The proofs appear in Section \ref{sec:proofs}. 

Our approach will be to argue via rational approximations. To this end, we begin by showing that Theorem \ref{thm:landau} extends to generalised tournaments with rational score sequences.

\begin{lem}\label{lem:rational}
A sequence $(d_i)_{i=1}^n$ of non--negative rationals is the score sequence of some generalised tournament on $n$ vertices if and only if $\sum_{i\in J} d_i\geq {|J| \choose 2}$ for all subsets $J\subseteq \{1,2,\dots , n\}$, with equality for $J=\{1,2,\dots, n\}$.
\end{lem}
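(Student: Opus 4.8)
The plan is to prove the nontrivial (sufficiency) direction by a blow-up reduction to Theorem \ref{thm:landau}; necessity is immediate and does not even use Landau's theorem. For necessity, given a generalised tournament with score sequence $(d_i)$ and any $J \subseteq \{1,\dots,n\}$, I would split $\sum_{i\in J} d_i = \sum_{i \in J}\sum_{j \ne i}\alpha(i,j)$ into the contribution of pairs inside $J$, which equals $\binom{|J|}{2}$ because $\alpha(i,j)+\alpha(j,i)=1$, and the nonnegative contribution of edges leaving $J$; equality holds precisely when $J = \{1,\dots,n\}$, where nothing leaves.

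For sufficiency, suppose $(d_i)$ are nonnegative rationals satisfying the inequalities, and let $N$ be a common denominator, so that $Nd_i \in \mathbb{Z}$ for all $i$. I would replace each vertex $i$ by a block $B_i$ of $N$ vertices and seek an ordinary tournament $T$ on the $Nn$ vertices $\bigsqcup_i B_i$ whose scores sum to $S_i := N^2 d_i + \binom{N}{2}$ on each block $B_i$. Given such a $T$, I would recover a generalised tournament on $\{1,\dots,n\}$ by setting $\alpha(i,j) = N^{-2} e_T(B_i,B_j)$ for $i\ne j$, where $e_T(B_i,B_j)$ counts the edges of $T$ directed from $B_i$ to $B_j$; since every cross pair is oriented exactly once, $\alpha(i,j)+\alpha(j,i)=1$, and the outdegree of $i$ becomes $N^{-2}\bigl(S_i - \binom{N}{2}\bigr) = d_i$, because the $\binom{N}{2}$ internal edges of $B_i$ are exactly those not leaving the block.

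The existence of $T$ is where Theorem \ref{thm:landau} enters: I would prescribe within each block $B_i$ the balanced integer scores (each equal to $Nd_i + \lfloor (N-1)/2\rfloor$, with $S_i \bmod N$ of them increased by one) and check that the resulting length-$Nn$ sequence satisfies Landau's conditions. The equality for the full vertex set is the identity $\sum_i S_i = N^2\binom{n}{2} + n\binom{N}{2} = \binom{Nn}{2}$, using $\sum_i d_i = \binom{n}{2}$. The subset inequalities are the crux. For $J \subseteq \bigsqcup_i B_i$ I would write $m_i = |J \cap B_i|$ and bound the total score on $J$ from below by selecting the smallest scores in each block; the key device is the layer-cake identity $\sum_i m_i d_i = \sum_{t\ge 1}\sum_{i : m_i \ge t} d_i$, which lets me apply the hypothesis $\sum_{i\in J'} d_i \ge \binom{|J'|}{2}$ to each threshold set $J'_t = \{i : m_i \ge t\}$. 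Writing $c_t = |J'_t|$, this reduces the required bound to an elementary estimate for nonincreasing $c_1 \ge c_2 \ge \cdots \ge 0$ with $\sum_t c_t = |J|$, which I expect to hold with room to spare, the correction term for even $N$ being supplied exactly by the unevenly distributed scores. Verifying this elementary inequality uniformly in the parity of $N$ is the main obstacle; once it is in place, Theorem \ref{thm:landau} produces $T$ and the collapse above finishes the proof.
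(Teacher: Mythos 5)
Your strategy is the paper's own: blow each vertex $i$ up into a block of $N$ vertices whose scores sum to $S_i = N^2 d_i + \binom{N}{2}$, invoke Theorem \ref{thm:landau} on the length-$Nn$ sequence, and collapse by averaging via $\alpha(i,j) = N^{-2}e_T(B_i,B_j)$. Indeed, the paper takes within-block scores $Nd_i + b_\ell$ where $(b_\ell)_{\ell=1}^N$ is an \emph{arbitrary} tournament score sequence on $N$ vertices, of which your balanced/near-regular choice is a special case; your collapse step, the identity $\sum_i S_i = \binom{Nn}{2}$, and the necessity direction are all fine. The genuine gap is that the crux of the entire proof --- verifying that the blown-up sequence satisfies Landau's subset inequalities --- is left as a conjecture: you reduce it to ``an elementary estimate \dots which I expect to hold with room to spare'' and explicitly flag its verification as ``the main obstacle.'' Until that estimate is proved there is no proof, and your formulation makes it needlessly delicate: by bounding the within-block contribution using the explicit balanced values, you create the parity case analysis you worry about, and the estimate is in fact \emph{tight} (equality holds whenever $J$ is a union of complete blocks), so there is no room to spare to absorb a sloppy parity correction.

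Both difficulties disappear with the two observations that constitute the paper's verification. First, since $(b_\ell)$ is itself a valid score sequence on $N$ vertices, Landau \emph{necessity} applied to it gives, for any $J$ meeting block $B_i$ in $m_i$ vertices, a within-block contribution of at least $\binom{m_i}{2}$, uniformly in the parity of $N$ and with no case analysis. Second, sort $m_1 \le \cdots \le m_n$ and Abel-sum (your layer-cake identity is the same manipulation: $\sum_t \binom{c_t}{2} = \sum_i m_i(n-i)$ for the sorted sequence, where $c_t = |\{i : m_i \ge t\}|$); the hypothesis on $(d_i)$ then yields $\sum_i m_i d_i \ge \sum_i m_i(n-i)$, and your missing elementary estimate is the one-line bound $N\sum_i m_i(n-i) = \sum_{i<r} N m_i \ge \sum_{i<r} m_i m_r = \frac{1}{2}\bigl(\bigl(\sum_i m_i\bigr)^2 - \sum_i m_i^2\bigr)$, valid simply because each $m_r \le N$. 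Adding the three pieces gives $\sum_{v \in J} s_v \ge \frac{1}{2}\bigl(\bigl(\sum_i m_i\bigr)^2 - \sum_i m_i\bigr) = \binom{|J|}{2}$, which is exactly your target $N\sum_t \binom{c_t}{2} + \sum_i \binom{m_i}{2} \ge \binom{M}{2}$. With this inserted, your argument coincides with the paper's proof.
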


The forward direction is straightforward, so we prove only the backward direction. The idea of the proof is the following. Start with a rational sequence $(d_i)_{i=1}^n$ as above. We want to prove that there is a generalised tournament having this sequence as its score sequence. Instead we will consider a related sequence on $mn$ elements, all integers, where $m$ is chosen large enough that all $md_i$ are integers. We will show that this sequence satisfies the condition in Theorem \ref{thm:landau}, so that it is the score sequence of some non--generalised tournament. Having this tournament, we will group its $mn$ vertices into $n$ clusters. Then we define a generalised tournament on $n$ vertices. Each vertex will correspond to one of the clusters, and the edge weights between vertices will be the average edge weight between the corresponding clusters. It suffices to show that the object defined in this manner is a well--defined generalised tournament with score sequence $(d_i)_{i=1}^n$.

\begin{remark}Given a score sequence, it is natural to ask for an algorithm that outputs a tournament with the given score sequence, see e.g. \cite{Gervacio1988}. Any such algorithm, coupled with the procedure in our proof, can also be used to construct generalised tournament with given rational score sequence. Moreover, the edge weights in any such construction will be rational.
\end{remark}

To prove Theorem \ref{thm:moon}, we need to approximate the real score sequence by rational score sequences. The following lemma states that this can be done in the desired way.
\begin{lem}\label{lem:tech}
 Let $(d_i)_{i=1}^n$ be a sequence of non--negative reals such that 
\begin{align}
 \sum_{i\in J} d_i \geq \binom{|J|}{2} 
\end{align}
for any $J\subseteq \{1,2,\dots, n\}$ with equality for $J=\{1,2,\dots, n\}$. Then there exist sequences $(d_i^{(m)})_{i=1}^n$ of non--negative rationals such that $d_i^{(m)} \to d_i$ as $m\to \infty$, for each $i=1,2, \dots, n$, and moreover, for all $m\geq 1$,
\begin{align}
 \sum_{i\in J} d_i^{(m)} \geq \binom{|J|}{2} 
\end{align}
for any $J\subseteq \{1,2,\dots, n\}$ with equality for $J=\{1,2,\dots, n\}$. \end{lem}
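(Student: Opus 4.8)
The plan is to read the statement geometrically. Let
$P=\{x\in\mathbb{R}^n:\sum_{i\in J}x_i\geq\binom{|J|}{2}\text{ for all }J\subseteq\{1,\dots,n\},\ \sum_{i=1}^n x_i=\binom{n}{2}\}$.
This is a polytope cut out entirely by linear inequalities together with one linear equality, all of whose coefficients and right-hand sides are integers, and the hypotheses say precisely that $d=(d_i)_{i=1}^n\in P$. Producing the desired sequences then amounts to finding rational points of $P$ arbitrarily close to $d$, i.e. to showing that $P\cap\mathbb{Q}^n$ is dense in $P$ near $d$. (Non-negativity of the coordinates is automatic, since it is the constraint indexed by $J=\{i\}$, so I need not track it separately.) The one subtlety is that $d$ may lie on the boundary of $P$, so a naive perturbation could leave $P$; the constraints that are tight at $d$ must be handled with care, and in particular the equality for $J=\{1,\dots,n\}$ must survive exactly.

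The key observation is that the constraints which are tight at $d$ have integer right-hand sides, so forcing them to hold with equality defines a \emph{rational} affine subspace. Concretely, let $\mathcal{E}=\{J\subseteq\{1,\dots,n\}:\sum_{i\in J}d_i=\binom{|J|}{2}\}$ be the family of tight sets; by assumption $\{1,\dots,n\}\in\mathcal{E}$. Set $A=\{x\in\mathbb{R}^n:\sum_{i\in J}x_i=\binom{|J|}{2}\text{ for all }J\in\mathcal{E}\}$. Then $A$ is the solution set of a consistent linear system with integer coefficients and integer right-hand sides, and $d\in A$. I would then invoke the elementary fact that the rational points are dense in any nonempty rational affine subspace: choosing a rational particular solution $x_0\in A$ and a rational basis $v_1,\dots,v_s$ of the associated linear subspace (both obtainable by Gaussian elimination over $\mathbb{Q}$), and writing $d=x_0+\sum_j\lambda_j v_j$ with $\lambda_j\in\mathbb{R}$, I approximate each $\lambda_j$ by rationals to obtain rational points $q^{(m)}=x_0+\sum_j\lambda_j^{(m)} v_j\in A\cap\mathbb{Q}^n$ with $q^{(m)}\to d$.

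It remains to check that these $q^{(m)}$ actually lie in $P$, and that feasibility can be arranged for every $m\geq 1$. For $J\in\mathcal{E}$ we have $\sum_{i\in J}q_i^{(m)}=\binom{|J|}{2}$ by construction, so the corresponding inequality holds, and for $J=\{1,\dots,n\}$ the required equality holds exactly. For each of the finitely many $J\notin\mathcal{E}$ the inequality is strict at $d$, with slack $\delta_J=\sum_{i\in J}d_i-\binom{|J|}{2}>0$; since a strict inequality is an open condition, it persists for all $x$ within $\ell^1$-distance $\delta:=\min_{J\notin\mathcal{E}}\delta_J>0$ of $d$. Thus any rational $q\in A$ with $\|q-d\|_1<\delta$ lies in $P$. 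To meet the requirement for all $m\geq 1$ at once, I choose the rational approximations so that $\|q^{(m)}-d\|_1<\min(\delta,1/m)$, which is possible by density; this guarantees both $q^{(m)}\in P$ for every $m$ and $q^{(m)}\to d$. Setting $d_i^{(m)}=q_i^{(m)}$ then finishes the proof. The only genuine obstacle is the one resolved in the second paragraph, namely ensuring that the equality for $J=\{1,\dots,n\}$ (and, more generally, non-violation of the proper tight sets) is not destroyed by rationalisation; this is exactly why I freeze all tight constraints into the rational affine subspace $A$ rather than perturbing $d$ freely, the integrality of the right-hand sides $\binom{|J|}{2}$ being what makes $A$ rational and hence rich in rational points.
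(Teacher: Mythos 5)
Your proof is correct, and it takes a genuinely different route from the paper's. You establish the general polyhedral fact that rational points are dense in a polyhedron cut out by rational data: you freeze all tight constraints into the rational affine subspace $A$ (where field-independence of rank guarantees a rational particular solution, and a rational kernel basis gives density of $A\cap\mathbb{Q}^n$ in $A$), and you observe that the finitely many slack constraints are open conditions, so any rational point of $A$ within $\ell^1$-distance $\min(\delta,1/m)$ of $d$ lies in $P$. Every step checks out, including non-negativity via the singleton constraints $J=\{i\}$ and the exact equality for $J=\{1,\dots,n\}$, which is tight at $d$ and hence preserved in $A$; the only cosmetic gap is that if every $J$ is tight then your $\delta$ is a minimum over an empty family, so you should set $\delta=+\infty$ or note the slack condition is then vacuous. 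The paper argues quite differently, by an explicit perturbation: after ordering so that $d_1$ is maximal, it proves the combinatorial lemma that $d_1+\sum_{i\in J}d_i>\binom{|J|+1}{2}$ strictly for every proper $J\subsetneqq\{2,\dots,n\}$ --- i.e.\ the maximal coordinate belongs to no proper tight set --- which creates room to decrease $d_1$ to a rational $d_1^{(m)}$, nudge $d_2,\dots,d_{n-1}$ up to rationals within the budget $(d_1-d_1^{(m)})/(n-1)$, and define $d_n^{(m)}$ by the full-set equality. Your approach buys generality and uniformity: it uses nothing about Landau's inequalities beyond integrality of the right-hand sides, treats all tight sets at once with no case analysis, and would transfer verbatim to other rational polytopes (such as the self-converse variant cited at the end of Section 1), at the cost of invoking linear-algebra machinery. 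The paper's perturbation is more elementary and fully explicit --- it names the approximants coordinate by coordinate, which fits its Remark about extracting an actual construction --- but it depends on the special combinatorial structure singling out the maximal score.
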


The idea behind the proof is to perturb the sequence $(d_i)_{i=1}^n$ to a rational sequence without disturbing the validity of the above condition.

Given Lemma \ref{lem:rational} and Lemma \ref{lem:tech}, we then show that Theorem \ref{thm:moon} holds. The proof of this is not difficult, and we only prove the more difficult direction. Given a real sequence $(d_i)_{i=1}^n$ satisfying the condition in Theorem \ref{thm:moon}, we will approximate it by rational sequences $(d_i^{(m)})_{i=1}^n$ as in Lemma \ref{lem:tech}. By Lemma \ref{lem:rational} there exists some generalised tournament $G_m$ with score sequence $(d_i^{(m)})_{i=1}^n$; we may furthermore assume that these are defined on the same vertex set. Finally we note that the edge weights of the $G_m$ form a sequence in the compact set $[0,1]^{\binom{n}{2}}$, so we may select a subsequence of $(G_m)_{m=1}^{\infty}$ such that all edge weights converge. The limiting object will turn out to be a generalised tournament with score sequence $(d_i)_{i=1}^n$.

The proof method outlined above should allow for similar extensions from (some) ``non--generalised'' results to their ``generalised'' counterparts. Such an example is provided in \cite{Thornblad2016e}, which uses the same proof method to show that Eplett's characterisation of possible score sequences of self--converse tournaments also is valid for self--converse generalised tournaments.

\section{Proofs}\label{sec:proofs}

\begin{proof}[Proof of Lemma \ref{lem:rational}]
  Let $(d_i)_{k=1}^n$ be a sequence of non--negative rational numbers as in the statement of the lemma. Since the $d_i$ are rational, there exist $k_i,m_i\in \mathbb{N}$ (with no common factors) such that $d_i=k_i/m_i$. Denote by $m$ the lowest common multiple of $m_i$. (If some $k_i=0$, we may take $m_i=1$; this may happen for at most one $i$.)
 
We first construct an $n\times m$--array which will contain the outdegrees of a (non--generalised) tournament. For $i=1,\dots, n$ and $\ell=1,\dots, m$, let
\begin{align}
c_{i,\ell} = md_i+b_{\ell}
\end{align}
where $(b_{\ell})_{\ell=1}^m$ is some arbitrary score sequence of a tournament on $m$ vertices. Note that $c_{i,\ell}$ is an integer for all $i=1,\dots, n$ and $\ell=1,\dots, m$. We claim that the $c_{i,\ell}$ constitutes a valid tournament score sequence, i.e. that the condition in Theorem \ref{thm:landau} holds. To see this, take some $J\subseteq \{1,\dots, n\}\times \{1,\dots, m\}$ and consider the partition $J=J_1\cup J_2\cup \dots \cup J_n$, where $J_i=\{(i,k)\in J \ : \ k\in\{1,2,\dots, m\}\}$. Let $j_i=|J_i|$ and assume that the $J_i$ have been ordered so that $0\leq j_1\leq j_2 \dots \leq j_n \leq m$. It suffices to check the three inequalities
\begin{align}
 \sum_{i=1}^n j_id_i & \geq \sum_{i=1}^n j_i(n-i), \\
 \sum_{(i,\ell)\in J} b_{i,\ell} & \geq \frac{1}{2}\left(\sum_{i=1}^n j_i^2 -\sum_{i=1}^n j_i \right), \\
m\sum_{i=1}^n j_i(n-i) & \geq \frac{1}{2} \left(\left(\sum_{i=1}^n j_i\right)^2 -\sum_{i=1}^n j_i^2 \right).
\end{align}
To see that this is enough, note that
\begin{align}
 \sum_{(i,\ell)\in J}c_{i,\ell}=m\sum_{i=1}^n \sum_{(i,\ell)\in J_i}d_i + \sum_{(i,\ell)\in J}b_{i,\ell} 
&= m\sum_{i=1}^n j_id_i + \sum_{(i,\ell)\in J}b_{i,\ell} \\ 
&\geq \frac{1}{2}\left(\left(\sum_{i=1}^n j_i\right)^2-\sum_{i=1}^n j_i \right) \\
& = \binom{|J|}{2},
\end{align}
where the inequality follows by the three inequalities above, and the final equality follows by $|J|=\sum_{i=1}^n j_i$. 

Let us verify that the three inequalities hold. Define for notational convention $j_0=0$. The first inequality follows by noting that we can rewrite the sum (twice) as a telescoping sum and using the fact that the sequence $(d_i)_{i=1}^n$ satisfies our condition, i.e.
\begin{align}
 \sum_{i=1}^n j_id_i = \sum_{i=1}^n (j_i-j_{i-1})\sum_{k=i}^n d_k
&\geq \sum_{i=1}^n (j_i-j_{i-1})\binom{n-i+1}{2} \\
&=\sum_{i=1}^n j_i \left(\binom{n-i+1}{2}-\binom{n-i}{2} \right) \\
&=\sum_{i=1}^n j_i(n-i)
\end{align}
The second inequality follows since $(b_{\ell})_{\ell=1}^m$ forms a valid score sequence, so
\begin{align}
 \sum_{(i,\ell)\in J}b_{i,\ell} = \sum_{i=1}^n \sum_{(i,\ell)\in J_i}b_{i,\ell} \geq \sum_{i=1}^n \binom{j_i}{2} =  \frac{1}{2}\left(\sum_{i=1}^n j_i^2 -\sum_{i=1}^n j_i \right)
\end{align}
For the third inequality,
\begin{align}
 m\sum_{i=1}^{n} j_i(n-i) = m\sum_{i=1}^{n-1} \sum_{r=i+1}^n j_i \geq \sum_{i=1}^{n-1} \sum_{r=i+1}^n j_i j_r = \frac{1}{2} \left(\left(\sum_{i=1}^n j_i\right)^2 -\sum_{i=1}^n j_i^2 \right).
\end{align}

% Let us do this for the particular case that $m$ is odd and $b_{\ell}=\frac{m-1}{2}$, since this is somewhat simpler than the general case. In this case the terms $c_{i,\ell}$ are constant for fixed $i$. We may assume that $d_1\leq d_2 \leq \dots \leq d_n$. It is not difficult to show that it suffices to check that the score condition for ``initial segments'' rather than all sets $J$. Therefore, it suffices to confirm, for any $i'\in \{1,2,\dots, n\}$ and any $\ell'\in \{1,2,\dots, m\}$, that
% \begin{align}
%  \sum_{i=1}^{i'-1}\sum_{\ell=1}^{m}c_{i,\ell}+\sum_{\ell=1}^{\ell'}c_{i',\ell} \geq \binom{m(i'-1)+\ell'}{2}.
% \end{align}
% But the left hand side equals
% \begin{align}
%  m^2 \sum_{i=1}^{(i'-1)} d_i + (i'-1)m\frac{m-1}{2} + \ell' m d_{i'} + \ell' \frac{m-1}{2} \geq m^2 \binom{i'-1}{2} + (i'-1) \binom{m}{2} + \ell' md_{i'} +  \ell' \frac{m-1}{2}
% \end{align} 

This shows that the $c_{i,\ell}$ form a valid score sequence. By Theorem \ref{thm:landau}, there exists a (non--generalised) tournament $H$ on $mn$ vertices with outdegrees $c_{i,\ell}$. Denote by $v_{i,\ell}$ the vertex of $H$ with outdegree $c_{i,\ell}$, so that 
\begin{align}
 V(H)=\{v_{i,\ell} \ : \ i=1,\dots, n \text{ and } \ell=1,\dots, m\}.
\end{align}

Define the function $\alpha : \{1,2,\dots, n\}^2 \to [0,1]$ by 
\begin{align}
 \alpha(i,j) = \frac{1}{m^2}\sum_{\ell=1}^m \sum_{k=1}^m \mathbbm{1}((v_{i,\ell},v_{j,k})\in E(H))
\end{align}
for any $i,j=1,2,\dots, n$ with $i\neq j$, and $\alpha(i,i)=0$ for all $i=1,2,\dots, n$. We claim that $G=(V(G),\alpha)$, where $V(G)=\{1,2,\dots, n\}$, is a well--defined generalised tournament with score sequence $(d_i)_{i=1}^n$. It is well--defined since
\begin{align}
 \alpha(i,j)+\alpha(j,i) = \frac{1}{m^2} \sum_{\ell=1}^m \sum_{k=1}^m (\mathbbm{1}((v_{i,\ell},v_{j,k})\in E(H)) + \mathbbm{1}(v_{j,k},v_{i,\ell})\in E(H))) = 1,
\end{align}
for all $i,j=1,2,\dots, n$ with $i\neq j$. Trivially also $\alpha(i,j)\in [0,1]$. To see that $G$ has score sequence $(d_i)_{i=1}^n$, note that
\begin{align}
 \sum_{\substack{j=1 \\ j\neq i}}^n \alpha(w_i,w_j) 
& = \frac{1}{m^2} \sum_{\ell=1}^m \sum_{\substack{j=1 \\ j\neq i}}^n \sum_{k=1}^m \mathbbm{1}((v_{i,\ell},v_{j,k})\in E(H)) \\
& = \frac{1}{m^2} \sum_{\ell=1}^m (md_i+b_{\ell}) - \frac{1}{m^2} \binom{m}{2} \\
&= d_i,
\end{align}
where we have cancellation because $b_{\ell}$ was chosen to be a valid score sequence, implying that $\sum_{\ell=1}^m b_{\ell}=\binom{m}{2}$. This proves that $G$ is a generalised tournament (with rational edge weights) with score sequence $(d_i)_{i=1}^n$, completing the proof.
\end{proof}

\begin{proof}[Proof of Lemma \ref{lem:tech}]
Fix $m\geq 1$. Suppose the numbers $(d_i)_{i=1}^n$ have been ordered so that $d_1\geq \max\{d_2,\dots, d_n\}$. We claim first that this implies
\begin{align}
 d_1+\sum_{i\in J}d_i > \binom{|J|+1}{2}
\end{align}
for all $|J|\subsetneqq \{2,\dots, n\}$. Suppose not and let the set $J\subsetneqq \{2,\dots, n\}$ be a counterexample. Take any $1\neq k\notin J$. Then
\begin{align}
 \binom{|J|+1}{2} = d_1 + \sum_{i\in J}d_i \geq d_k + \sum_{\in J}d_i \geq \binom{|J|+1}{2}
\end{align}
which implies that $d_k=d_1$. Then
\begin{align}
\binom{|J|+2}{2} \leq  d_1+d_k+\sum_{i \in J}d_i 
&= \left( d_1+\sum_{i\in J}d_i\right) + \left(d_k + \sum_{i\in J}d_i\right) - \sum_{i\in J}d_i \\
&\leq 2\binom{|J|+1}{2}-\binom{|J|}{2} \\
&= \binom{|J|+2}{2}-1,
\end{align}
a contradiction. 

The above implies that we can choose $d_{1}^{(m)}$ to be some non--negative rational such that $d_{1}-1/m<d_{1}^{(m)}<d_{1}$ and
\begin{align}
 d_1^{(m)}+ \sum_{i\in J} d_i  > \binom{|J|}{2}
\end{align}
for all $J \subsetneqq \{2,\dots, n\}$. This gives us enough room to define $d_i^{(m)}$ for all other $i=2,3,\dots, n$. We choose these as follows. For $i=2,3,\dots, n-1$, let $d_i^{(m)}$ be some rational satisfying
\begin{align}
 d_{i}< d_{i}^{(m)}<d_{i}+\frac{d_1-d_1^{(m)}}{n-1}
\end{align}
Finally let
\begin{align}
 d_n^{(m)} = \binom{n}{2} - \sum_{i=1}^{n-1} d_i^{(m)}.
\end{align}
Note that
\begin{align}
 d_n^{(m)} = \binom{n}{2} - \sum_{i=1}^{n-1} d_i^{(m)} > \binom{n}{2} - d_1^{(m)} - \sum_{i=2}^{n-1} d_i - (d_1-d_1^{(m)}) = \binom{n}{2} -\sum_{i=1}^{n-1} d_i = d_n.
\end{align}

With this choice of $(d_i^{(m)})_{i=1}^n$, we have 
\begin{align}
 \sum_{i\in J}d_i^{(m)} \geq \binom{|J|}{2} 
\end{align}
for all $J\subseteq \{1,2,\dots, n\}$ with equality for $J=\{1,2,\dots, n\}$. To see this, note that $d_i^{(m)}>d_i$ for $i=2,3,\dots, n$, and since the corresponding inequalities hold for $(d_i)_{i=1}^n$, there can only be a problem if $1\in J$. But this was taken care of by our choice of $d_1^{(m)}$. By construction we also have $d_i^{(m)}\to d_i$ as $m\to \infty$, for all $i=1,2, \dots, n$.
\end{proof}

\begin{proof}[Proof of Theorem \ref{thm:moon}]
Let $(d_i)_{i=1}^n$ be a sequence of non--negative reals satisfying 
\begin{align}
 \sum_{i\in J}d_i^{(m)} \geq \binom{|J|}{2} 
\end{align}
for all $J\subseteq \{1,2,\dots, n\}$ with equality for $J=\{1,2,\dots, n\}$. By Lemma \ref{lem:tech}, we can approximate $(d_i)_{i=1}^n$ by rational sequences $(d_i^{(m)})_{i=1}^n$ such that $d_i^{(m)}\to d_i$ as $m\to \infty$, and such that $(d_i^{(m)})_{i=1}^m$ is the score sequence of some generalised tournament (applying Lemma \ref{lem:rational}). Denote by $G_m=(V(G_m),\alpha_m)$ any generalised tournament with score sequence $(d_i^{(m)})_{i=1}^m$. Assume that $V(G_m)=\{1,2,\dots, n\}$ for each $m\geq 1$, so that we may identify the edge sets.

By compactness of $[0,1]^{\binom{n}{2}}$, we may after passing to a subsequence assume that the tuple $(\alpha_m(i,j))_{i,j\in V(G)}$ converges coordinatwise.  Let $\alpha \ : \ \{1,\dots, n\} \to \{1,\dots, n\}$ the function defined by
\begin{align}
 \alpha(i,j) = \lim_{m\to \infty} \alpha_m(i,j)
\end{align}
for any $i,j=1,2,\dots, n$. Note that $\alpha(i,j)+\alpha(j,i)=1$ for $i\neq j$ and $\alpha(i,j)\in [0,1]$, so $\alpha$ defines a weight function. Let $G=(V(G),\alpha)$, where $V(G)=\{1,2,\dots, n\}$ be the generalised tournament with weight function $\alpha$. Since
\begin{align}
 \sum_{\substack{i=1 \\ i\neq j}}^n \alpha(i,j) = \lim_{m\to \infty} \sum_{\substack{i=1 \\ i\neq j}}^n \alpha_m(i,j) = \lim_{m\to \infty}d_i^{(m)}=d_i,
\end{align}
the generalised tournament $G$ has score sequence $(d_i)_{i=1}^n$. This proves Theorem \ref{thm:moon}.
\end{proof}

\section*{Acknowledgements}
The author thanks Katja Gabrysch for figuring out how to verify that the numbers $c_{i,\ell}$ form a valid score sequence, and an anonymous referee for helpful comments.

\bibliographystyle{abbrv}

\end{document}